\pgfplotsset{compat=1.18}
\newcommand*{\dif}{\mathop{}\!\mathrm{d}}
\newtheorem{theorem}{Theorem}[section] 
\newtheorem{definition}{Definition}[section] 
\newtheorem{lemma}{Lemma}[section]
\title{A $p$-adaptive treecode solution of the Poisson equation in the general domain} 
\author[1]{Zixuan Cui}
\author[1,2,\thanks{Corresponding author: \texttt{leiyang@must.edu.mo}}]{Lei Yang}
\affil[1]{School of Computer Science and Engineering,
	Macau University of Science and Technology, Macao SAR 999078, China.}
\affil[2]{Macau University of Science and Technology Zhuhai MUST Science and Technology Research Institute, Zhuhai 519031, China.}
\begin{document}
	\maketitle
	\begin{abstract}
		
		Raising the order of the multipole expansion is a feasible approach for improving the accuracy of the treecode algorithm. However, a uniform order for the expansion would result in the inefficiency of the implementation, especially when the kernel function is singular. In this paper, a $p$-adaptive treecode algorithm is designed to resolve the efficiency issue for problems defined on a general domain. Such a $p$-adaptive implementation is realized through i). conducting a systematical error analysis for the treecode algorithm, ii). designing a strategy for a non-uniform distribution of the order of multipole expansion towards a given error tolerance, and iii). employing a hierarchy geometry tree structure for coding the algorithm. The proposed $p$-adaptive treecode algorithm is validated by a number of numerical experiments, from which the desired performance is observed successfully, i.e., the computational complexity is reduced dramatically compared with the uniform order case, making our algorithm a competitive one for bottleneck problems such as the demagnetizing field calculation in computational micromagnetics.

		\noindent\textbf{Keywords: Treecode; $p$-adaptive method; hierarchy geometry tree; Poisson equation.}  
	\end{abstract}

	\section{Introduction}
	The solution of Poisson equation is a critical task in many fields of science and engineering from quantum physics and chemistry to materials science and biology \cite{kuang2024towards,motamarri2013higher,wilson2022tabi}. In these fields, problems such as the computation of the demagnetizing field in micromagnetism and the computation of Hartree potential in quantum physics often require a fast and accurate solution of Poisson equation. However, the use of direct numerical integration methods to compute the fundamental solution of the Poisson equation with long-range interactions leads to a computational complexity of $O(N^2)$, where $N$ is the number of mesh elements in computational domain $\Omega$. When $N$ is large, this complexity is unacceptable. 
	
	To address this issue, a series of fast algorithms have been proposed as potential solutions to the Poisson equation, including Multigrid method \cite{teunissen2023geometric}, Finite Element Method (FEM) \cite{larson2013finite}, Fast Fourier Transform (FFT) \cite{wu2013optimized}, nonuniform fast Fourier transform (NUFFT) \cite{greengard2004accelerating}, treecode algorithm \cite{christlieb2004efficient, cui2024treecode}, and the Fast Multipole Method (FMM) \cite{fryklund2023fmm}, etc. These algorithms reduce the complexity from $O(N^2)$ to $O(N\log N)$ even $O(N)$, respectively. While both the multigrid method and FEM are very efficient in solving the Poisson equation, approximation of boundary values with an integral form by direct summation is an $O(N^{4/3})$ operation in three dimensions, such as the calculation of the demagnetizing field in the Landau-Lifshitz-Gilbert (LLG) equation \cite{AAMM-11-1048, yang2021framework}. In addition, FFT is an efficient solver for Poisson equation based on the potential theory with complexity of $O(N\log N)$ but is limited to regular computational domains, such as rectangular grids and periodic boundary conditions. In practical applications, computational domains are often complex and irregular, such as when simulating dynamic magnetization in ferromagnetic materials with defects \cite{chen2005effect}. NUFFT is easy to deal with irregular domain, but the accuracy of this approach is hampered due to the singularity in the interaction kernel, such as $3$D Coulomb interaction. Therefore, a high-order discretization by generalized Gaussian quadrature \cite{ma1996generalized} or coordinate transform \cite{jiang2014fast, bao2015computing} has to be used for canceling the singularity, which unavoidably adds some computational costs. Treecode is a competitive one for problems defined in complex domain with $O(N\log N)$, and FMM is even better due to its optimal computational complexity $O(N)$. However, in parallel environment, due to the translation method used in FMM, the two method, treecode and FMM, are comparable \cite{yokota2011comparing}.

	 There is a challenge for solving Poisson equation based on the potential theory, where singularity is encountered in modeling a variety of problems. One of the typical cases is the Coulomb interaction arising in Schr\"{o}dinger equation and Kohn--Sham equation \cite{bardos2000weak, kohn1965self, sheehy2007quantum}. Dipole-dipole interactions with same or different dipole orientation arises in quantum chemistry, dipolar Bose-Einstein condensation, etc. \cite{bao2012gross,haken1995molecular, lahaye2009physics}.  As a result, an accurate quadrature has to be used in order to handle the singularity of kernel function. However, a direct use of the standard FFT or NUFFT, a phenomenon know as “numerical locking” occurs, limiting the achievable precision \cite{tikhonenkov2008anisotropic}. The treecode originally approximate the interaction with low expansion for efficiency. 
	 In addition to the accuracy issue, the approximation of the treecode algorithm lacks global continuity, which leads to energy drift when the order in multipole expansion is low, such as in molecular dynamics simulations \cite{skeel2002multiple, sagui2001multigrid, tlupova2022effect}. 
		
	Uniformly increasing the expansion order improves the accuracy and stability of the treecode algorithm but incurs a computational cost of \( O(p^3) \), where \( p \) is the expansion order. Enlarging the near-field region for direct summation can also enhance accuracy but at a significant computational expense. To address these issues, we focus on a non-uniform adjustment of the expansion order in multipole expansions to achieve an efficient and accurate solution to the Poisson equation.

	In this paper, we propose a $p$-adaptive treecode algorithm designed to improve the efficiency of solving the Poisson equation in general domains, by resolving following issues. First of all, a systematic error analysis is conducted, providing deeper insights into local errors within the hierarchical tetrahedral mesh. Furthermore, we develop a strategy for the non-uniform distribution of the multipole expansion order based on a given error tolerance. Comparing with traditional treecode methods that employ a uniform expansion order, our approach adaptively adjusts the expansion order according to the relative distance between target and source points, thereby avoiding local precision redundancy or insufficiency that often arises from uniform-order approximations. This strategy enhances computational efficiency while maintaining accuracy, particularly in large-scale problems or cases requiring high precision.
	
	One of the key challenges in implementing the proposed algorithm is efficiently constructing the connection between the mesh and the tree structure. To address this, we code the algorithm by a hierarchy geometry tree data structure, which facilitates the efficient generation of the tree structure within the tetrahedral mesh and manages interactions between neighbor and non-neighbor elements in the treecode algorithm. This tetrahedral mesh-based implementation allows flexible application of the algorithm to general domains. Furthermore, a trick for additional acceleration through direct summation is proposed.
	
	The proposed $p$-adaptive treecode algorithm is validated through various numerical experiments, which demonstrate significant performance improvements. Specifically, the computational complexity is dramatically reduced compared to the uniform order case, making the algorithm highly competitive for bottleneck problems such as demagnetizing field calculations in computational micromagnetics.
	
	The organization of this paper is as follows. In Section \ref{section 2}, we introduce the fundamental solution of the Poisson equation and review the treecode algorithm with uniform order. In Section \ref{section 3}, we present the error analysis of the multipole expansion for the treecode algorithm and describe the details of the $p$-adaptive treecode algorithm. Section \ref{section 4} examines the accuracy and efficiency of the proposed algorithm, comparing it with the uniform order case and evaluating the impact of various parameters. Finally, conclusions are presented in Section \ref{section 5}.

	\section{Poisson equation and its treecode solver}
	\label{section 2}
	In this section, we first review the unbounded Poisson equation and its fundamental solution. Then the mesh-based treecode algorithm is described by using multipole expansion with uniform order.
	\subsection{Poisson equation}
	Consider a general unbounded Poisson equation with natural boundary condition defined on $\mathbb{R}^d$ ($d = 2, 3$):
	\begin{equation} \left\{
		\begin{array}{lr} -\Delta u(\mathbf{x}) = f(\mathbf{x}), \\
			\lim\limits_{\mid\mathbf{x}\mid\rightarrow\infty}u(\mathbf{x}) = 0.
		\end{array} \right.
		\label{Poisson eq}
	\end{equation} 
	The above equation appears in the calculation of demagnetizing, and Hartree potential, etc. Using the Green's function approach, the fundamental solution of Eq.~(\ref{Poisson eq}) can be given by the convolution written as
	\begin{equation}
		u(\mathbf{x}) = \int_{\mathbb{R}^d}
		G(\mathbf{x},\mathbf{y})f(\mathbf{y})\dif\mathbf{y},
		\label{fundamental solution} 
	\end{equation}
	with
	\begin{equation} 
		G(\mathbf{x},\mathbf{y}) = \left\{
		\begin{aligned} &\dfrac{1}{2\pi} \ln\dfrac{1}{|\mathbf{x}-\mathbf{y}|}, &d=2,\\
			&\dfrac{1}{4\pi|\mathbf{x}-\mathbf{y}|}, &d=3.
		\end{aligned} \right.
	\end{equation}
	Here, the Green's function $G(\mathbf{x},\mathbf{y})$ satisfies
	\begin{equation} 
		\left\{
		\begin{array}{lr} 
			\Delta G = -\delta(\mathbf{x}-\mathbf{y}),\\ 
			\lim\limits_{|\mathbf{x}|\rightarrow\infty}G = 0,
		\end{array} 
		\right.
	\end{equation}
	where $\delta(\mathbf{x}-\mathbf{y})$ is the Dirac delta function.
	
	To solve Eq.~(\ref{fundamental solution}), the problem has to be formulated on a bounded computational domain $\Omega \subset \mathbb{R}^d$. A domain truncation approach is employed to handle the integration over unbounded domain. If the solution $u(\mathbf{x})$
	exhibits rapid decay as $|\mathbf{x}|$ tends to infinity, the truncation error is defined as 
	\begin{equation}
		E_{DT} = \max_{\mathbf{y}\in\partial\Omega}|u(\mathbf{y})|,
	\end{equation}
	or otherwise
	\begin{equation}
		E_{DT}  = \max_{\mathbf{y}\in \mathbb{R}^3\setminus\Omega} |u(\mathbf{y})|.
	\end{equation}
	The radius of computational domain should be chosen to be as large as possible to minimize truncation errors. 
	\subsection{Treecode solver with a uniform order of multipole expansion}
	In the subsequent section, we focus on the $3$D problem. Specifically, we aim to solve the following convolution integral over the truncated domain $\Omega$,
	\begin{equation}
		u(\mathbf{x}) = \int_\Omega G(\mathbf{x},\mathbf{y})f(\mathbf{y})\dif\mathbf{y}.
		\label{truncated int}
	\end{equation}
	To discretize the domain $\Omega$, a tetrahedral mesh $\mathcal{T}_h = \{\tau_0, \cdots, \tau_{N-1}\}$ has been applied, where $\tau_i$ is the $(i-1)$-th element of the mesh. For the barycenter $\mathbf{x}_i$ of each element $\tau_i$, Eq.~(\ref{truncated int}) can be approximated by a direct summation method, denoted by
	\begin{equation}
		u(\mathbf{x}_i) \approx \sum_{K\in\mathcal{T}_h} \sum_{l=1}^{N_q}G(\mathbf{x}_i,\mathbf{y}_l)f(\mathbf{y}_n)\omega_l,
		\label{discrete form}
	\end{equation}
	where $K$ represents the tetrahedron in the mesh, $\mathbf{y}_l$ is the $l$-th quadrature point of the element $K$, $\omega_l$ is the associated quadrature weight, and $N_q$ is the total number of quadrature points. The direct calculation of $u(\mathbf{x})$ involves $O(N^2)$ operations due to non-local interactions, making it impractical for large $N$.

	To improve computational efficiency, the treecode algorithm can be employed, which reduces the complexity to $O(N\log N)$ by employing a divide-and-conquer strategy. The treecode algorithm consists of two steps: 1. building a hierarchical data structure and 2. approximating the far-field interactions by multipole expansion.
	
	In the first part, treecode algorithm divides the computational domain into hierarchical domains and then forming a tree structure. The treecode algorithm based on tetrahedral mesh constructs a multi-level tree structure by progressively refining the mesh. Suppose the initial mesh is given by $\mathcal{T}_0 = \{\tau_0, \tau_1\}$. After refinement, the second-level mesh becomes $\mathcal{T}_1 = \{\tau_{00}, \cdots, \tau_{07}, \tau_{10}, \cdots, \tau_{17}\}$. By repeating this process $l-1$ times, a complete octree structure with $l$ levels is obtained. In this tree, the barycenters of the leaf-level tetrahedrons are considered as target points for computation.
	\begin{figure}[!ht]
		\centering
		\includegraphics[width = 0.4\textwidth]{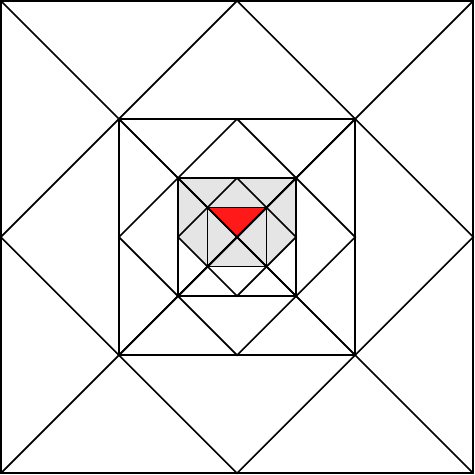}
		\caption{Treecode structure for a single target element with red background. Gray triangles are neighbor elements and white elements are non-neighbor elements.}
		\label{treecode structure}
	\end{figure} 
	For a given target point, treecode allows elements in far-field domain to be merged at higher levels, reducing the computational cost. Fig.~\ref{treecode structure} illustrates the mesh distribution for a single target element in 2D, where far-field elements are merged based on their distance from the target element. The elements in Fig.~\ref{treecode structure} are classified into two categories: neighbor elements and non-neighbor elements. Neighbor elements share common vertices with the target element, forming the near-field domain. All other elements are classified as non-neighbor elements, which form the far-field domain. This hierarchical mesh, constructed via a bottom-up recursive process, is the core data structure of treecode. One of the challenges in treecode is the efficient implementation and management of this tree structure. We address this by employing the hierarchy geometry tree method, with further details provided in Section \ref{section 3}.
	
	The second part of the treecode algorithm uses multipole expansion to approximate far-field interactions. In the near-field region ($\Omega_N$), interactions are computed directly using numerical quadrature due to the small number of elements. In the far-field region ($\Omega_F$), interactions are approximated using multipole expansions, where contributions come from non-neighbor elements. The resulting solution at a target point is obtained by summing the near-field contributions and the far-field approximations, as shown in Eq.~(\ref{NF_integral}).

	\begin{equation}
		u(\mathbf{x}_i) \approx \sum_{K\in\Omega_N}\sum_{l=1}^{N_q}G(\mathbf{x}_i,\mathbf{y}_l)f(\mathbf{y}_l)\omega_l + \sum_{K\in\Omega_F}\sum_{\|\mathbf{k}\|=0}^{p}a^{\mathbf{k}}(\mathbf{x}_i,\mathbf{y}_c)m_c^{\mathbf{k}},
		\label{NF_integral}
	\end{equation}
	where the $\mathbf{k}$-th order expansion coefficient $a^{\mathbf{k}}(\mathbf{x}_i,\mathbf{y}_c)$ of kernel function $G$ at the barycenter $\mathbf{y}_c$ of the element $K$, written as
	\begin{equation}
		a^{\mathbf{k}}(\mathbf{x}_i,\mathbf{y}_c) = \dfrac{1}{\mathbf{k}!} D^{\mathbf{k}}_{\mathbf{y}}G(\mathbf{x}_i,\mathbf{y}_c),
	\end{equation}
	where $D^{\mathbf{k}}_{\mathbf{y}}$ denotes $\mathbf{k}$-th order differential operator with respect to $\mathbf{y}$ and the $\mathbf{k}$th moment of the element $K$ denoted by
	\begin{equation}
		m_c^{\mathbf{k}} = \sum_{l=1}^{N_q}(\mathbf{y}_l - \mathbf{y}_c)^{\mathbf{k}}f(\mathbf{y}_l)\omega_l.
		\label{moment}
	\end{equation}
	In addition, Cartesian multi-index notation has been used with $\mathbf{k} = (k_1,k_2,k_3)$,
	$k_i \in \mathbb{N}$, $\|\mathbf{k}\| = k_1 + k_2 + k_3$,
	$\mathbf{k}! = k_1k_2k_3$, $\mathbf{y} = (y_1, y_2, y_3)$,
	$ y_i \in \mathbb R$, $\mathbf{y}^{\mathbf{k}} = y_1^{k_1}y_2^{k_2}y_3^{k_3}$,
	$D^{\mathbf{k}}_{\mathbf{y}} = D_{y_1}^{k_1}D_{y_2}^{k_2}D_{y_3}^{k_3}$.
	In implementing higher-order multipole expansions, we follow the recurrence relations in \cite{li2009cartesian} for the expansion coefficients $a^{\mathbf{k}}(\mathbf{x},\mathbf{y}_c)$,
	\begin{equation}
		\|\mathbf{k}\|\left|\mathbf{x} - \mathbf{y}_c\right|^2 a^{\mathbf{k}} - (2\|\mathbf{k}\| - 1)\sum_{i=1}^{3}(x_i - y_i)a^{\mathbf{k} - \mathbf{e}_i} +(\|\mathbf{k}\| - 1) \sum_{i=1}^{3}a^{\mathbf{k} - 2\mathbf{e}_i} = 0,
		\label{recurrence relation}
	\end{equation}
	where $\mathbf{e}_i$ are the Cartesian basis vectors.
	
	However, it is important to note that even with the precomputation of the moment $m_c^{\mathbf{k}}$, each $p$th order multipole expansion approximation still requires $O(p^3)$ operations. This computational cost becomes substantial when higher-order expansions are employed to enhance algorithm accuracy and for large-scale computations. In order to reduce this additional burden, we adopt non-uniform distribution of the order of multipole expansion, selecting the minimum order $p$ that satisfies a specified accuracy criterion. In the following section, we present an error analysis of the treecode algorithm, which serves as a guideline for the accuracy criterion.

	\section{Treecode with non-uniform order of multipole expansion}
	\label{section 3}
	In this section, we perform an error analysis of the treecode algorithm, providing an accuracy criterion for adaptively selecting the expansion order $p$. We also outline the implementation details and complexity of the $p$-adaptive treecode algorithm. Additionally, a trick for further acceleration using direct summation is proposed.

	\subsection{An error analysis for the treecode algorithm}
	In this subsection, we provide a systematic error analysis for the treecode algorithm, which will play an important role in the following subsections. Firstly, we give a definition of neighbor and non-neighbor elements according to the distance between elements.
	
	\begin{definition}
		Let $r_y$ denote the maximum radius of element $K$ as shown in Fig.~\ref{distance}, and $R_K$ be the distance between the barycenter $\mathbf{y}_c$ of $K$ and the target point $\mathbf{x}_i$. If the ratio 
		\begin{equation}
			r_K \equiv \dfrac{r_y}{R_K} \le 1,
		\end{equation}
		then $K$ is a non-neighbor element belonging to the far-field domain $\Omega_F$. Otherwise, $K$ is a neighbor element belonging to the near-field domain $\Omega_N$.
		\label{def 1}
	\end{definition}
	
	\begin{figure}[!ht]
		\centering
		\includegraphics[width=0.6\textwidth]{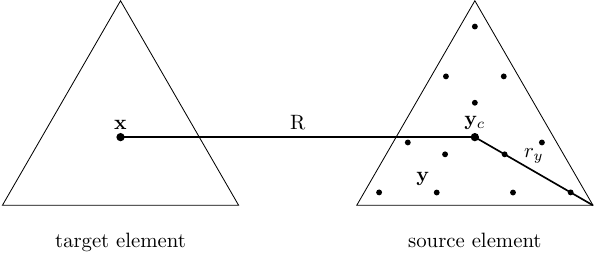}
		\caption{The well-separated target element and source element. Here, $\mathbf{x}$ and $\mathbf{y}_c$ denote the barycenters of the two elements, $R = \left|\mathbf{x}-\mathbf{y}_c\right|$ is the distance between the barycenters, $\mathbf{y}$ represents the quadrature points in the source element, and $r_y$ is the maximum radius of the source element.}
		\label{distance}
	\end{figure}
	
	To prove Theorem~\ref{Th 1}, we rely on the equivalence of the remainder terms in the multipole and Gegenbauer expansions, as formalized in the following lemma.
	\begin{lemma}
		Let $\mathbf{x}$ and $\mathbf{y}$ be two distinct points in \( \mathbb{R}^3 \), and consider the kernel function $ 1/|\mathbf{x} - \mathbf{y}|$. Suppose $|\mathbf{x}|<|\mathbf{y}|$, the multipole expansion of the kernel function in Cartesian coordinates around $ \mathbf{y} = \mathbf{y}_c$ and the Gegenbauer polynomial expansion have equivalent remainder terms. Specifically, for the expansions up to the order $p$, we have
		\begin{equation}
			\sum_{k=p+1}^{\infty}\sum_{\|\mathbf{k}\|=k}\dfrac{1}{\mathbf{k}!} D^{\mathbf{k}}_{\mathbf{y}} \dfrac{1}{|\mathbf{x} - \mathbf{y}_c|}(\mathbf{y}-\mathbf{y}_c)^{\mathbf{k}}= 	\sum_{k=p+1}^{\infty}\dfrac{|\mathbf{x}|^k}{|\mathbf{y}|^{k+1}}C_k^{1/2}\left(\dfrac{\mathbf{x}\cdot\mathbf{y}}{|\mathbf{x}|\cdot|\mathbf{y}|}\right),
			\label{equivalent}
		\end{equation}
		where $C_k^{1/2}(\mathbf{y})$ is the Gegenbauer polynomial with $k$th degree.
		\label{lemma 1}
	\end{lemma}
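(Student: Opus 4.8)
The plan is to reduce the claimed equality of remainders to a stronger statement: the equality of the two expansions \emph{degree by degree}. Concretely, I would prove that for every homogeneous degree $k$,
\begin{equation}
	\sum_{\|\mathbf{k}\|=k}\dfrac{1}{\mathbf{k}!} D^{\mathbf{k}}_{\mathbf{y}} \dfrac{1}{|\mathbf{x} - \mathbf{y}_c|}(\mathbf{y}-\mathbf{y}_c)^{\mathbf{k}} = \dfrac{r_<^{\,k}}{r_>^{\,k+1}}\,C_k^{1/2}(\cos\gamma),
\end{equation}
where $r_<$ and $r_>$ are the smaller and larger of the two radial distances measured from $\mathbf{y}_c$ and $\gamma$ is the angle they subtend. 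Once this holds for each $k$, summing over $k\ge p+1$ reproduces Eq.~(\ref{equivalent}) directly, the interchange of the multi-index sum with the degree sum being legitimate by absolute convergence in the well-separated regime of Definition~\ref{def 1}, where the source displacement $\mathbf{y}-\mathbf{y}_c$ is strictly dominated in magnitude by the distance to the target.

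First I would derive the right-hand side from the classical generating function. Writing $|\mathbf{x}-\mathbf{y}|^2$ by the law of cosines about $\mathbf{y}_c$ and factoring out the larger radius reduces the kernel to $r_>^{-1}\,(1-2st+s^2)^{-1/2}$ with $s=r_</r_><1$ and $t=\cos\gamma$. The Gegenbauer generating identity $(1-2st+s^2)^{-1/2}=\sum_{k\ge 0}C_k^{1/2}(t)\,s^k$, in which $C_k^{1/2}$ is the Legendre polynomial $P_k$, then produces exactly the graded series on the right, each term being homogeneous of degree $k$ in the small displacement and of degree $-(k+1)$ in the large one.

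Next I would identify the left-hand side as the \emph{same} series, merely regrouped by total multi-index degree. The full sum $\sum_{\mathbf{k}}\tfrac{1}{\mathbf{k}!}D^{\mathbf{k}}_{\mathbf{y}}\tfrac{1}{|\mathbf{x}-\mathbf{y}_c|}(\mathbf{y}-\mathbf{y}_c)^{\mathbf{k}}$ is the multivariate Taylor expansion of $\mathbf{y}\mapsto 1/|\mathbf{x}-\mathbf{y}|$ about $\mathbf{y}_c$, and collecting the terms with $\|\mathbf{k}\|=k$ isolates its unique homogeneous component of degree $k$ in $\mathbf{y}-\mathbf{y}_c$. Since a real-analytic function admits a unique decomposition into homogeneous polynomials and both series converge to the same kernel on the overlap region, the degree-$k$ Cartesian block must coincide with the degree-$k$ Gegenbauer term. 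I expect this matching of homogeneous parts to be the main obstacle: one must argue that the generating-function series and the Cartesian Taylor series, a priori written in different variables, are genuinely the same power series so that their degree-$k$ components may be equated. I would make this rigorous either by invoking uniqueness of the Taylor coefficients of the common analytic function $1/|\mathbf{x}-\mathbf{y}|$, or by recognizing $\sum_{\|\mathbf{k}\|=k}\tfrac{1}{\mathbf{k}!}D^{\mathbf{k}}_{\mathbf{y}}(1/|\mathbf{x}-\mathbf{y}_c|)(\mathbf{y}-\mathbf{y}_c)^{\mathbf{k}}$ as a solid-harmonic (Legendre) polynomial through the standard relation between the Cartesian derivatives of $1/r$ and $P_k$.

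Finally, with the per-degree identity in hand and absolute convergence guaranteed for $r_<<r_>$, I would sum both sides from $k=p+1$ to infinity; the regrouping is justified term by term, and the two tails coincide, which is precisely the asserted equivalence of the multipole and Gegenbauer remainders. This per-degree equality is exactly what makes the Gegenbauer tail a faithful surrogate for the Cartesian truncation error, and hence the quantitative bound that Theorem~\ref{Th 1} will need.
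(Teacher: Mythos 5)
Your proof is correct, and it hinges on the same pivotal fact as the paper's --- the two series agree degree by degree, so their tails agree --- but it justifies that fact by a genuinely different mechanism. The paper writes down the Cartesian expansion (\ref{taylor expansion}) and the Gegenbauer expansion (\ref{Gegenbauer}) and argues that, because the Cartesian coefficients $T^{\mathbf{k}}$ obey the recurrence (\ref{recurrence relation}) while the Gegenbauer polynomials obey the analogous recurrence (\ref{RC_Legendre}), the degree-$k$ blocks must coincide; equality of the remainders then follows exactly as in your final step. Your route replaces this recurrence-matching with the generating-function derivation of the Legendre/Gegenbauer series plus uniqueness of the homogeneous (Taylor) decomposition of the analytic function $\mathbf{y}\mapsto 1/|\mathbf{x}-\mathbf{y}|$ about $\mathbf{y}_c$. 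This buys two things. First, rigor: the paper's ``comparing \dots\ and combining with the recurrence relations'' step is the weakest link of its argument (the implicit induction on degree with matching initial terms is never carried out, and its displayed per-degree identity even carries a spurious absolute value on one side), whereas your uniqueness argument is airtight and standard. Second, consistency: by measuring $r_<$, $r_>$ and the angle $\gamma$ from $\mathbf{y}_c$, you silently repair a mismatch in the lemma's statement, whose left-hand side is centered at $\mathbf{y}_c$ while its right-hand side is the Gegenbauer series about the origin in the variables $|\mathbf{x}|$, $|\mathbf{y}|$; the paper only makes this re-centering explicit later, in Eq.~(\ref{legendre expansion}) inside the proof of Theorem~\ref{Th 1}. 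What the paper's approach buys in exchange is economy: the recurrences (\ref{recurrence relation}) and (\ref{RC_Legendre}) are already needed to implement the expansion coefficients, so its proof introduces no machinery beyond them. Either way, the per-degree identity is the real content, and your solid-harmonic/uniqueness justification of it is the cleaner of the two.
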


	\begin{proof}
		The kernel function $1/|\mathbf{x} - \mathbf{y}|$ can be expanded at the center of source element by multipole expansion in Cartesian coordinate, written as
		
		\begin{equation}
			\frac{1}{\left|\mathbf{x} - \mathbf{y}\right|} = \sum_{\|\mathbf{k}\|=0}^{\infty}T^{\mathbf{k}}(\mathbf{x},\mathbf{y}_c)(\mathbf{y}-\mathbf{y}_c)^{\mathbf{k}},
			\label{taylor expansion}
		\end{equation}
		with 
		\begin{equation}
			T^{\mathbf{k}}(\mathbf{x},\mathbf{y}_c) = \dfrac{1}{\mathbf{k}!} D^{\mathbf{k}}_{\mathbf{y}} \dfrac{1}{|\mathbf{x} - \mathbf{y}_c|},
		\end{equation}
		and the coefficient $T^{\mathbf{k}}(\mathbf{x},\mathbf{y}_c)$ satisfies the recurrence relations (\ref{recurrence relation}).
		Similarly, the Gegenbauer polynomial expansion of the kernel function can be given by \cite{duan2001adaptive}:
		\begin{equation}
			\frac{1}{\left|\mathbf{x} - \mathbf{y}\right|} = \sum^{\infty}_{k=0} \frac{|\mathbf{x}|^k}{|\mathbf{y}|^{k+1}}C_k^{1/2}\left(\frac{\mathbf{x}\cdot\mathbf{y}}{|\mathbf{x}|\cdot|\mathbf{y}|}\right),
			\label{Gegenbauer}
		\end{equation}
		which is convergent when $|\mathbf{x}|<|\mathbf{y}|$. It is worth noting that the expansion coefficients $C_k^{1/2}(y)$ exhibit a recurrence relation analogous to that observed in Eq.~(\ref{recurrence relation}), denoted by
		\begin{equation}
			kC_k^{1/2}(y) - (2k-1)\theta C_{k-1}^{1/2}(y)+(k-1)C_{k-2}^{1/2}(y) = 0
			\label{RC_Legendre}
		\end{equation}
		for $k\geq2$, where $C_0^{1/2} = 1$ and $C_1^{1/2} = y$. 

		
		Comparing Eq.~(\ref{taylor expansion}) with Eq.~(\ref{Gegenbauer}) and then combining them with the recurrence relations (\ref{recurrence relation}) and (\ref{RC_Legendre}), we obtain the following equality
		\begin{equation}
			\sum_{\|\mathbf{k}\|=k}T^{\mathbf{k}}(\mathbf{x},\mathbf{y}_c)(\mathbf{y}-\mathbf{y}_c)^{\mathbf{k}}= \left|\dfrac{|\mathbf{x}|^k}{|\mathbf{y}|^{k+1}}C_k^{1/2}\left(\dfrac{\mathbf{x}\cdot\mathbf{y}}{|\mathbf{x}|\cdot|\mathbf{y}|}\right)\right|.
		\end{equation}
		Since the $k$-th terms in both series are identical, it follows that their respective remainder terms must also be equivalent. Consequently, the remainder of the multipole expansion in Cartesian coordinates satisfies Eq.~(\ref{equivalent}).
	\end{proof}

	 By Lemma 1, the remainder terms of the multipole expansion and the Gegenbauer expansion are shown to be equivalent. Leveraging this equivalence, together with the orthogonality and recurrence properties of the Gegenbauer polynomials, we analyze the error introduced by the multipole expansion in the treecode algorithm.

	\begin{theorem}
		Let \( T_h \) be a tetrahedral mesh of the computational domain \( \Omega \) with characteristic size \( h \), and let \( \{\mathbf{x}_i\} \) denote the barycenters of the elements in \( T_h \). Let \( \{K\} \) be the set of source elements associated with \( x_i \), and assume the following:
		\begin{itemize}
			\item The treecode algorithm computes the near-field interactions exactly.
			\item The far-field contributions are approximated using a \( p \)-th order multipole expansion in Cartesian coordinates.
			\item The source function \( f(\mathbf{x}) \) is bounded above by a positive constant \( F > 0 \) in \( \Omega \).
		\end{itemize}
		
		Under these assumptions, the error \( E(\mathbf{x}_i) \) at the target point \( \mathbf{x}_i \), arising from the multipole approximation of far-field interactions, satisfies:
		\begin{equation}
			E(\mathbf{x}_i) \leq C \left| \sum_{K \in \Omega_F} \frac{r_K^{p+1} |\Omega_K|}{R_K (1 - r_K)} \right|,
		\end{equation}
		where \( C \) is a constant depending on \( f(\mathbf{x}) \), \( |\Omega_K| \) is the volume of the element \( K \), and \( r_K \), \( R_K \) are defined as in Definition~\ref{def 1}.
		\label{Th 1}
	\end{theorem}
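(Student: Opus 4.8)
The plan is to express the total error as a sum of per-element truncation errors and to bound each one using the remainder formula furnished by Lemma~\ref{lemma 1}. Since the near-field interactions are computed exactly by assumption, the error $E(\mathbf{x}_i)$ originates entirely from truncating the multipole expansion at order $p$ on each far-field element $K \in \Omega_F$. Writing the exact far-field contribution of $K$ as $\int_K G(\mathbf{x}_i,\mathbf{y}) f(\mathbf{y}) \dif \mathbf{y}$ and subtracting its order-$p$ multipole approximation, the error contributed by $K$ equals $\int_K R_p(\mathbf{x}_i,\mathbf{y}) f(\mathbf{y}) \dif\mathbf{y}$, where $R_p$ denotes the remainder appearing on the left-hand side of Eq.~(\ref{equivalent}), with $\mathbf{y}-\mathbf{y}_c$ as the expansion variable.

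First I would invoke Lemma~\ref{lemma 1}, identifying the two points through the geometry of the far-field: the small vector is $\mathbf{y}-\mathbf{y}_c$ (of length at most $r_y$) and the large vector is $\mathbf{x}_i-\mathbf{y}_c$ (of length $R_K$), so that the convergence hypothesis $|\mathbf{y}-\mathbf{y}_c| < |\mathbf{x}_i-\mathbf{y}_c|$ is precisely the far-field condition $r_K < 1$. Lemma~\ref{lemma 1} then replaces the Cartesian remainder $R_p$ by the Gegenbauer remainder $\sum_{k=p+1}^{\infty} \frac{|\mathbf{y}-\mathbf{y}_c|^k}{|\mathbf{x}_i-\mathbf{y}_c|^{k+1}} C_k^{1/2}(\cos\theta)$, where $\theta$ is the angle between the two vectors. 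The central estimate is the uniform bound $|C_k^{1/2}(\cos\theta)| = |P_k(\cos\theta)| \le 1$, valid because the Gegenbauer polynomials $C_k^{1/2}$ coincide with the Legendre polynomials, whose values on $[-1,1]$ never exceed one in magnitude. Substituting $|\mathbf{y}-\mathbf{y}_c| \le r_y$ and $|\mathbf{x}_i-\mathbf{y}_c| = R_K$ and recognizing $r_y/R_K = r_K$, the remainder is dominated by the geometric series $\frac{1}{R_K}\sum_{k=p+1}^{\infty} r_K^k = \frac{r_K^{p+1}}{R_K(1-r_K)}$, which converges precisely because $r_K < 1$. This pointwise bound on $R_p$ is uniform in $\mathbf{y}$ over $K$.

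Finally I would integrate this bound against $f$ over $K$, using $|f(\mathbf{y})| \le F$, to obtain a per-element error at most $\frac{F\, r_K^{p+1}}{R_K(1-r_K)}|\Omega_K|$, then sum over all $K \in \Omega_F$ and absorb $F$ (together with the kernel's $1/4\pi$ prefactor) into the constant $C$, yielding the claimed estimate. The main obstacle is the first step: justifying that the Cartesian multipole remainder admits the clean, $\mathbf{y}$-independent majorant supplied by the Gegenbauer form. This hinges entirely on Lemma~\ref{lemma 1} and on the correct identification of which point plays the convergent role, since a naive term-by-term Cartesian tail estimate would involve the sum $\sum_{\|\mathbf{k}\|=k} |a^{\mathbf{k}}| |\mathbf{y}-\mathbf{y}_c|^{\|\mathbf{k}\|}$, which is far less transparent to bound. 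Once the Legendre bound $|P_k|\le 1$ and the geometric summation are in place, the remaining estimates are routine.
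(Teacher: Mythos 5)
Your proposal is correct and follows essentially the same route as the paper's own proof: you decompose the error into per-element multipole truncation remainders, convert each Cartesian remainder to its Gegenbauer form via Lemma~\ref{lemma 1} after re-centering so that the convergent role is played by $\mathbf{y}-\mathbf{y}_c$ against $\mathbf{x}_i-\mathbf{y}_c$ (which is exactly the paper's ``modification'' leading to the MAC condition), apply the bound $|C_k^{1/2}|\le 1$, sum the geometric series to get $r_K^{p+1}/\bigl(R_K(1-r_K)\bigr)$, and integrate against $f$ with $|f|\le F$ absorbing $F/4\pi$ into $C$. The only cosmetic difference is that you state the error decomposition first and invoke the remainder bound second, whereas the paper does the reverse.
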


	\begin{proof}
		
		According to Lemma \ref{lemma 1}, the remainders of the multipole expansion and the Gegenbauer polynomial expansion are equivalent under the condition that $|\mathbf{x}|<|\mathbf{y}|$. However, in the treecode, it cannot always be guaranteed that the source element is farther from the origin than the target element, as depicted in Fig.~\ref{distance}. Consequently, the convergence condition of Eq.~(\ref{Gegenbauer}) may not be satisfied, requiring a modification to the method. Consider the relative positions between the source and target points as the basis for the expansion terms, denoted as
		\begin{equation}
			\begin{aligned}
				\frac{1}{\left|\mathbf{x} - \mathbf{y}\right|} &=  \frac{1}{\left|\mathbf{y}-\mathbf{y}_c - (\mathbf{x}-\mathbf{y}_c)\right|}\\
				~\\
				&=
				\sum^{\infty}_{k=0} \dfrac{|\mathbf{y}-\mathbf{y}_c|^k}{|\mathbf{x}-\mathbf{y}_c|^{k+1}}C_k^{1/2}\left(\frac{\mathbf{y}-\mathbf{y}_c}{|\mathbf{y}-\mathbf{y}_c|}\cdot\frac{\mathbf{x}-\mathbf{y}_c}{|\mathbf{x}-\mathbf{y}_c|}\right),
			\end{aligned}
			\label{legendre expansion}
		\end{equation}
		which results in $\left|\mathbf{y}-\mathbf{y}_c\right| < \left|\mathbf{x}-\mathbf{y}_c\right|$, ensuring that the convergence condition is satisfied after modification. This condition is naturally met when the partitioned mesh is regularized and is commonly referred to as the multipole acceptance criterion (MAC), 
		namely
		
		\begin{equation}
			\dfrac{\left|\mathbf{y}-\mathbf{y}_c\right|}{\left|\mathbf{x}-\mathbf{y}_c\right|}\le  r_K.
			\label{MAC}
		\end{equation}
		Using the fact that $C_k^{1/2}(y) \le 1$ for $|y|\le1$, and 
		\begin{equation}
			\left|\dfrac{\mathbf{y}-\mathbf{y}_c}{|\mathbf{y}-\mathbf{y}_c|}\cdot\frac{\mathbf{x}-\mathbf{y}_c}{|\mathbf{x}-\mathbf{y}_c|}\right| < 1,
		\end{equation}
		we obtain the following inequality
		\begin{equation}
			\sum_{k=p+1}^{\infty}\dfrac{|\mathbf{x}|^k}{|\mathbf{y}|^{k+1}}C_k^{1/2}\left(\dfrac{\mathbf{x}\cdot\mathbf{y}}{|\mathbf{x}|\cdot|\mathbf{y}|}\right) \le \sum^{\infty}_{k=p+1} \frac{r^k_K}{R_K} =\frac{r_K^{p+1}}{R_K(1-r_K)}.
		\end{equation}
		Consequently, the following inequality can be derived from Lemma \ref{lemma 1}:
		\begin{equation}
			\sum_{k=p+1}^{\infty}\sum_{\|\mathbf{k}\|=k}\left|T^{\mathbf{k}}(\mathbf{x},\mathbf{y}_c)(\mathbf{y}-\mathbf{y}_c)^{\mathbf{k}}\right|
			\le \frac{r_K^{p+1}}{R_K(1-r_K)}.
		\end{equation}
		Given that the source function $f$ is bounded above in $\Omega$ and suppose the near-field integration is computed exactly, an error bound for the treecode algorithm can be derived as follows:
		\begin{equation}
			\begin{aligned}
				E(\mathbf{x}_i)=&\left|\int_{\Omega}G(\mathbf{x}_i,\mathbf{y})f(\mathbf{y})\dif \mathbf{y} - \left[\int_{\Omega_N}G(\mathbf{x}_i,\mathbf{y})f(\mathbf{y})\dif \mathbf{y} +  \int_{\Omega_F}\sum_{\|\mathbf{k}\|=0}^{p}a^{\mathbf{k}}(\mathbf{x}_i,\mathbf{y}_c)(\mathbf{y}-\mathbf{y}_c)^{\mathbf{k}}f(\mathbf{y})\dif \mathbf{y}\right]\right|\\
				~\\
				=&\left|\sum_{K\in\Omega_F}\left(\int_{\Omega_K}G(\mathbf{x}_i,\mathbf{y})f(\mathbf{y})\dif \mathbf{y} - \int_{\Omega_K}\sum_{\|\mathbf{k}\|=0}^{p}a^{\mathbf{k}}(\mathbf{x}_i,\mathbf{y}_c)(\mathbf{y}-\mathbf{y}_c)^{\mathbf{k}}f(\mathbf{y})\dif \mathbf{y}\right)\right|\\
				~\\
				=&\left|\sum_{K\in\Omega_F}\int_{\Omega_K}\sum_{\|\mathbf{k}\|=p+1}^{\infty}a^{\mathbf{k}}(\mathbf{x}_i,\mathbf{y}_c)(\mathbf{y}-\mathbf{y}_c)^{\mathbf{k}}f(\mathbf{y})\dif \mathbf{y}\right|\\
				~\\
				\le&\dfrac{1}{4\pi}\left|\sum_{K\in\Omega_F}\int_{\Omega_K}\dfrac{r_K^{p+1}}{R_K(1-r_K)}f(\mathbf{y})\dif \mathbf{y}\right|\\
				~\\
				\le&\dfrac{1}{4\pi}\left|\sum_{K\in\Omega_F}\dfrac{r_K^{p+1}|F|\left|\Omega_K\right|}{R_K(1-r_K)}  \right|\\
				~\\
				=&C\left|\sum_{K\in\Omega_F}\dfrac{r_K^{p+1}\left|\Omega_K\right|}{R_K(1-r_K)}  \right|,
			\end{aligned}
		\end{equation}
		where $C = |F|/4\pi$.
	\end{proof}

	Theorem~\ref{Th 1} establishes a rigorous framework for quantifying the error introduced by the multipole expansion in the treecode algorithm. This result enhances the theoretical understanding of the accuracy of treecode algorithm and serves as the foundation for the improvement of accuracy. Based on the error bound, the next section presents a $p$-adaptive strategy that balances computational efficiency and accuracy by dynamically adjusting the expansion order.
	
	\subsection{A $p$-adaptive implementation}
	\label{sub p-adaptive implementation}
	In this subsection, we provide the implementation of the $p$-adaptive treecode algorithm, outlined in Algorithm \ref{Al.Treecode}. Firstly, given an initial mesh and a uniform refinement level, a hierarchical tree structure is constructed using the Hierarchy Geometry Tree (HGT) \cite{li2005multi}, which reads the mesh information utilizing the relationships of geometries from different dimensions. 
	
	In HGT, the mesh data is described hierarchically, and $nD$ geometries ($n = 0, 1, 2, 3$) are defined and connected through belonging-to relationships between geometries. Taking a tetrahedron as an example, a point ($0D$ geometry) belongs to an edge of the tetrahedron ($1D$ geometry), which in turn belongs to one of the faces of the tetrahedron ($2D$ geometry), and so on. The advantage of this geometric description lies in its ability to simplify the management of refined or coarsened meshes.
	
	In addition, HGT uses a tree data structure to store and manage geometry information during the mesh refinement or coarsening process. For example, as shown in Fig.~\ref{mesh}, the transition from the left to the middle figure illustrates the uniform refinement of a tetrahedron by connecting the midpoints of its edges, resulting in eight smaller tetrahedrons. Further, the right figure shows the result of locally refining two of these smaller tetrahedrons. The mesh produced after successive refinements can be described using a tree structure, as depicted in Fig.~\ref{octree}, which represents the tree corresponding to the right mesh in Fig.~\ref{mesh}. This non-full octree structure is particularly well-suited for dividing the neighbor and non-neighbor elements of the target element in the treecode algorithm, as illustrated in Fig.~\ref{treecode structure}.
	\begin{figure}[!ht]
		\begin{center}
			\includegraphics[width =0.25\textwidth]{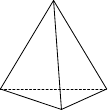}
			\hspace{1em}
			\includegraphics[width =0.25\textwidth]{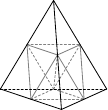}
			\hspace{1em}
			\includegraphics[width =0.25\textwidth]{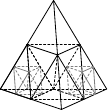}
		\end{center}
		\caption{The parent tetrahedron (left), the eight child tetrahedrons after once refinement (middle), and additional 16 child tetrahedrons after local refinement (right).}
		\label{mesh}
	\end{figure}
	The leaf nodes of the tree exactly cover the entire computational domain, while the near-field (neighbor elements) and far-field (non-neighbor elements) regions in the treecode algorithm also span the full domain. This ensures that the method is perfectly aligned with the tree construction process in the treecode algorithm. Therefore, the HGT-based implementation of the treecode algorithm not only generates the hierarchical tree but also efficiently manages the data, like the set of non-neighbor elements, associated with each element.
	
	\begin{figure}
		\centering
		\includegraphics[width =0.5\textwidth]{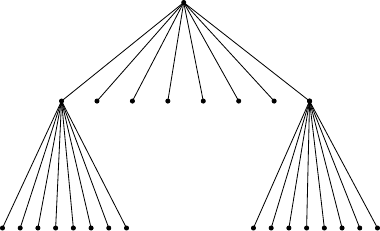}
		\caption{The octree data structure for the mesh in the right of Fig.~\ref{mesh}.}
		\label{octree}
	\end{figure}
	
	In the $p$-adaptive treecode algorithm, we consider all barycenters of leaf elements in HGT as the target points for computing interactions. Before estimating the solution, it is crucial to classify neighbor and non-neighbor elements for each leaf element, as this distinction plays a key role in the efficiency of the treecode algorithm. Selecting a leaf node $\tau_{l,0}$ first and identifying its neighbor elements as those that share common vertex with it. These neighbor elements are treated with direct interactions (near-field interactions) due to their proximity, which satisfies the MAC in Eq.~(\ref{MAC}). Next, move up the tree to the parent node $\tau_{l-1,0}$. The neighbor elements of $\tau_{l-1,0}$, excluding those already marked as neighbors of $\tau_{l,0}$, constitute the non-neighbor elements of $\tau_{l,0}$ at the leaf level, which may be approximated through far-field interactions. Merging upwards again, the non-neighbor elements of $\tau_{l-1,0}$ are also considered as non-neighbor elements of $\tau_{l,0}$ at level $l-1$. This process is repeated recursively up to the root node, allowing us to systematically classify all neighbor and non-neighbor elements for the target element across different levels of the octree. By traversing all leaf nodes, we efficiently collect the necessary near-field and far-field interaction lists for each one.
	
	Furthermore, during the traversal, we would like to gather other element-specific data that does not depend on the target element’s position, such as the quadrature points and their associated weights, as well as the multipole moments $m^\mathbf{k}_c$ of each element. To reduce computational costs, a maximum expansion order, $P_{max}$, is specified, and the corresponding multipole moments are precomputed and stored. These precomputed moments facilitate the efficient approximation of far-field interactions during the evaluation stage.

	Another critical step in the $p$-adaptive algorithm is solution estimation, which requires traversing each target element and dividing the computation into two components: 
	\begin{itemize}
		\item \textbf{Direct summation for neighbor elements:} A sixth-degree Gaussian type quadrature method is utilized to perform direct summation, ensuring accurate near-field calculations, given by
		\begin{equation}
			\int_{\Omega_N} G(\mathbf{x},\mathbf{y})f(\mathbf{y})\dif \mathbf{y} \approx \sum_{K\in\Omega_N}\sum_{l=1}^{24}J_lG(\mathbf{x},\mathbf{y})f(\mathbf{y}_l)|\Omega_K|\omega_l,
			\label{6th-degree gaussian}
		\end{equation}
		where $J_l$ is the jacobian at $\mathbf{y}_l$.
		In addition, the quadrature point in this quadrature rule does not pick the barycenter of the element, thus the singularity in Eq.~(\ref{6th-degree gaussian}) is avoided. 
		\item \textbf{$p$-adaptive multipole expansion for non-neighbor elements:} A multipole expansion of order $p$ is utilized, where $p$ is chosen to meet a specified accuracy criterion (\ref{criterion}).
		
		According to Theorem \ref{Th 1}, we provide a strategy for a non-uniform distribution of the order of multipole expansion order to achieve a given error tolerance $\epsilon$. Those non-neighbor elements $K$ are located at different levels of the tree structure, allowing us to estimate the error introduced by the source elements at each level separately, which yields
		\begin{equation}
			C\left|\sum_{l=1}^M\sum_{K\in\Omega_{F,l}}\dfrac{r_K^{p+1}\left|\Omega_K\right|}{R_K(1-r_K)}  \right|<\epsilon,
		\end{equation}
		where $M$ is the depth of the tree structure, and $\Omega_{F,l}$ is the far-field domain in the $l$th level.
		Therefore, for each level $l$, the accuracy criterion is given by
		\begin{equation}
			\sum_{K\in\Omega_{Fl}}\dfrac{r_K^{p+1}\left|\Omega_K\right|}{R_K(1-r_K)} <\dfrac{\epsilon}{CM},
		\end{equation}
		and the criterion for each source element $K$ can be derived as follows
		\begin{equation}
			\dfrac{r_K^{p+1}\left|\Omega_K\right|}{R_K(1-r_K)} <\dfrac{\epsilon}{nCM},
			\label{criterion}
		\end{equation}
		with $n$ as the number of source elements in the $l$th level. In summary, the fundamental solution $u(\mathbf{x})$ can be evaluated by using a multipole approximation of specified order $p$ when the accuracy criterion (\ref{criterion}) is satisfied.
	\end{itemize}

	\subsection{The flowchart of the algorithm and complexity}
	
	\begin{algorithm}[!ht]
		\caption{Treecode 1: $p$-adaptive treecode algorithm}  
		\begin{algorithmic}[1]
			\State
			\textbf{Input:} The definition of source function: $f(\mathbf{x})$;
			mesh file for $\Omega$;
			the depth of tree: $M$;
			error tolerance: $\epsilon$;
			maximum expansion order: $P_{max}$. \newline
			\textbf{Output:} The distribution of solution $u(\mathbf{x})$.
			\State Construct tree structure based on HGT.
			\For{$i = 1: N$} 
			\State Collect neighbor and non-neighbor elements according to MAC.
			\State Collect quadrature information from leaf layer.
			\State Compute moments of $P_{max}$th-order for each element in the octree.
			\EndFor      
			\For{$i = 1:N$}    
			\State Evaluate the solution $u(\mathbf{x}_i)$ by $p$-adaptive treecode algorithm.
			\For{$K\in\Omega_N$}
			\State $u += u_N(\mathbf{x}_i,\mathbf{y}_K)$ (by direct summation method).
			\EndFor
			\For{$K\in\Omega_F$}
			\State $u +=  u_F(\mathbf{x}_i,\mathbf{y}_K)$ (by multipole expansion with adaptive order).		
			\EndFor
			\EndFor 
			\Function{$u_{F}(\mathbf{x}, \mathbf{y})$}{} 
			
			
			\State Compute $p$ with accuracy criterion (\ref{criterion}).
			\If{$p >P_{max}$}
			\State	$p = P_{max}$.
			\EndIf
			\State Compute the $u_F$ by $p$th-order multipole expansion.
			\EndFunction
		\end{algorithmic}
		\label{Al.Treecode}
	\end{algorithm}

	The flowchart of the $p$-adaptive treecode algorithm is shown in Algorithm \ref{Al.Treecode}, based on the implementation details outlined in Subsection~\ref{sub p-adaptive implementation}. Next, we analyze the computational complexity. The total CPU time of the algorithm is divided into two components: preparation time for information could be pre-computed and the computation time for approximating the solution $u$.
	
	At the preparation stage, the octree is traversed to identify neighbor and non-neighbor elements for $N$ target elements, requiring $O(N\log N)$ operations. Pre-storing the quadrature information for leaf elements has a complexity of $O(N)$, while pre-storing the multipole moments requires $O(p^3N)$ operations. These tasks can be completed during the octree traversal. Therefore, the overall complexity of the preparation phase is $O(N\log N)+O(p^3N)$.
	
	At the computation stage, the time complexity of the multipole expansion for each target element is $O(p^3\log N)$. The direct summation for neighbor elements has a negligible contribution due to the small number of such elements. The use of an adaptive accuracy criterion, where the expansion order $p$ varies across elements, allows $O(p^3\log N)$ to be approximated as $O(\log N)$. Thus, the total time complexity for computing the solution $u$ is $O(NlogN)$.
	
	The overall space complexity can be divided into three components. First, storing the neighbor and non-neighbor elements requires $O(\log N)$ entries per target element, resulting in a total of $O(N \log N)$. Second, storing the quadrature information for leaf elements requires $O(N)$ storage. Third, representing the $p$th-order multipole moments for each element requires $O(p^3)$ storage per element, and with $O(N)$ elements, this contributes $O(p^3 N)$ to the total storage. Therefore, the overall space complexity is $O(N \log N) + O(p^3 N)$.
	
	From the analysis of time and space complexity, it is clear that when using a fixed expansion order $p$, the computational costs of high-order multipole expansion become significant as $p$ increases. In Section \ref{section 4}, we will demonstrate the advantages of the $p$-adaptive treecode algorithm.

	\subsection{A trick on further acceleration with direct summation}
	
	After analyzing the time and space complexity of the algorithm, we determined that pre-storing the multipole expansion for a given maximum order is necessary. However, as the order $p$ increases, both the computational time and storage requirements increase significantly. Moreover, higher-order multipole expansions can sometimes be less efficient than the direct summation method. Therefore, it is essential to set an appropriate maximum expansion order, $P_{max}$.
	To address this issue, we developed an independent program to evaluate the interaction between two elements. The program calculates using multipole expansions of varying orders $p$ and direct summation method. By comparing the CPU time $t_p$ of the $p$th-order multipole expansion with the CPU time $t_{ds}$ of the direct summation method, we determine that direct summation is preferable when $t_p>t_{ds}$, and set $P_{max}=p$.
	
	Algorithm \ref{improved Al} outlines the empirically improved approach for computing the far-field contribution. When the expansion order $p$, determined by the accuracy criterion, is less than $P_{max}$, the $p$th-order multipole expansion is used. Otherwise, if the source element is a leaf node, the direct summation method is applied. If the source element is a non-leaf node, the process is recursively applied by traversing down the octree to its subdomains.
	Since downward traversal may change the number of elements computed by multipole expansion in the process of algorithm execution, we introduce a new accuracy criterion as follows to manage this and ensure the algorithm's accuracy,
	\begin{equation}
		\dfrac{r_K^{p+1}\left|\Omega_K\right|}{R_K(1-r_K)} <\dfrac{\epsilon}{8^{s}nCM},
		\label{new criterion}
	\end{equation}
	where $s$ is the number of downward step.
	\begin{algorithm}[!ht]
		\caption{Treecode 2: further acceleration using direct summation}
		\begin{algorithmic}[1]
			
			\Function{$u_{F}(\mathbf{x}, \mathbf{y})$}{}
			\State Compute $p$ by accuracy criterion (\ref{new criterion}).
			\If{ p $\leq$ $P_{max}$}
			\State Compute the potential by multipole expansion with $p$th-order.
			\ElsIf{$K (\mathbf{y}\in K)$ is the leaf element}
			
			\State Compute the potential by direct summation method.
			\Else
			\For{sub-domain of $K$}
			\State Compute $u_{F}(\mathbf{x}, \mathbf{y})$.
			\EndFor			
			\EndIf
			\EndFunction
		\end{algorithmic}
		\label{improved Al}
	\end{algorithm}
	
	\section{Numerical experiments}
	\label{section 4}
	In this section, we examine the precision and efficiency of the $p$-adaptive treecode algorithm through solving the fundamental solution of Eq.~(\ref{Poisson eq}). 
	We truncate the computational domain from $\mathbb{R}^3$ to a finite domain $\Omega$, and the Poisson equation is given as follows when the solution $u(\mathbf{x})$ demonstrates exponential decay as $|\mathbf{x}|$ becomes sufficiently large.
	\begin{equation}
		\begin{cases}
			-\Delta u(\mathbf{x}) = f(\mathbf{x}), &\text{in } \Omega,\\
			u(\mathbf{x}) = 0 , &\text{on } \partial\Omega.
		\end{cases}	
	\end{equation}
	The corresponding fundamental solution can be written as 
	\begin{equation}
		u(\mathbf{x}) = \int_{\Omega}\dfrac{f(\mathbf{y})}{4\pi|\mathbf{x} - \mathbf{y}|}\dif \mathbf{y}.
		\label{model problem}
	\end{equation}
	
	All numerical experiments are carried out using the C++ package AFEPack \cite{cai2024afepack}. The hardware configuration consists of two AMD EPYC 7713 64-core processors running on the Ubuntu 22.04 operating system. For experiments involving CPU time, no program optimization parameters are applied, and all results are obtained using a single core.
	
	\subsection{Treecode 1: $p$-adaptive treecode algorithm}

	In this section, we aim to evaluate the performance of the proposed $p$-adaptive treecode algorithm in terms of accuracy, computational efficiency, and scalability.
		Firstly, we examine the accuracy of Treecode 1 compared with the uniform order case. The error is given by the $L^2$ error between numerical solution $u_{tc}$ of the treecode algorithm on the mesh with size $h$ and the exact solution $u(\mathbf{x}_i)$, denoted by
	\begin{equation}
		E_1 = \|u_{tc} - u\|_{L^2} = \left(\int_\Omega \left| u_{tc}(\mathbf{x})-u(\mathbf{x})\right|^2\dif \mathbf{x}\right)^{1/2}.
	\end{equation}
	
	In the following experiments, we consider a case defined on truncated domain $\Omega = [-2,2]\times[-2,2]\times[-2,2]$ with uniform tetrahedral mesh. The exact solution of Eq.~(\ref{model problem}) is given by $u(\mathbf{x}) = 2\exp(-\pi (x_1^2 +2x_2^2 + 3x_3^2))$, where $\mathbf{x} = (x_1, x_2, x_3)$. The corresponding source term $f(\mathbf{x})$ is expressed as
	
	\begin{equation}
		f(\mathbf{x}) = -(4\pi^2 x_1^2 + 16\pi^2 x_2^2 + 36\pi^2 x_3^2 - 12\pi)u(\mathbf{x}),
	\end{equation}
	where $f(\mathbf{x})$ attains its maximum value, $F = 24\pi$, at the origin $(0,0,0)$.

	Fig.~\ref{fig:err_convergence} illustrates the error of the uniform order case with $p=2, 10$, and Treecode 1 with a tolerance of $0.1$ and a maximum expansion order of $P_{max}=10$. The figure shows that the mesh refinement has a limited effect on improving the error when $p=2$ in uniform-order algorithm. In particular, when $N$ is large, the error tends to stabilize, making further reduction difficult. The error of the treecode algorithm with a $10$th-order uniform expansion and the $p$-adaptive algorithm decreases as the mesh is refined. This result validates the numerical convergence of the $p$-adaptive treecode and demonstrates that it achieves faster convergence compared to the treecode algorithm using lower-order uniform expansions.
	
	\begin{figure}[!ht]
		\centering
		\includegraphics[width=0.6\textwidth]{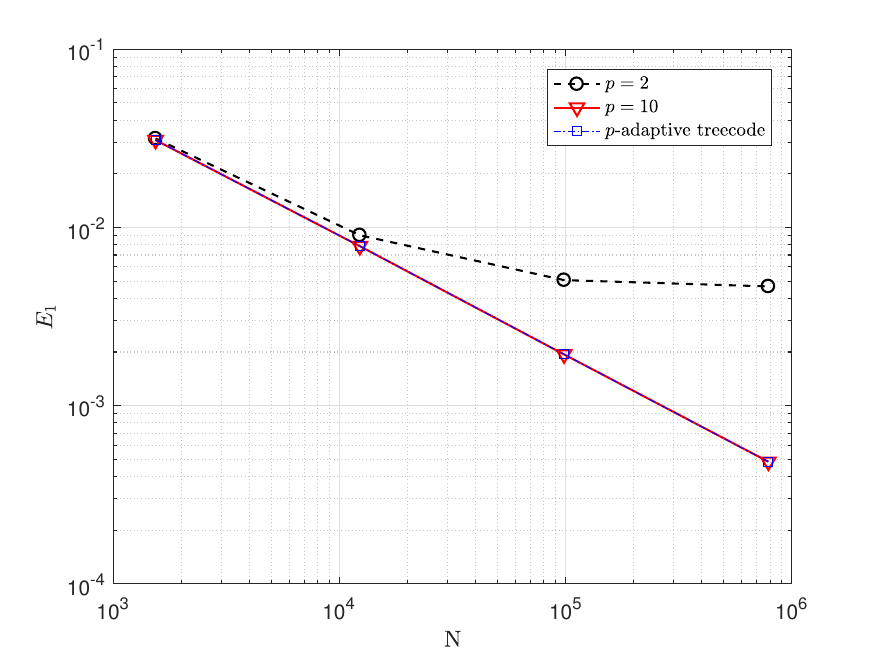}
		\caption{Comparison of $L^{2}$ error scaling with $N$ for treecode algorithm using different order distributions, where $N$ denotes the number of elements in $\Omega$.}
		\label{fig:err_convergence}
	\end{figure}

	Next, we verify the computational efficiency of the $p$-adaptive treecode matches its theoretical complexity of $O(N\log N)$. By comparing its runtime with that of the direct summation method, we demonstrate the significant computational advantage of the $p$-adaptive algorithm for large-scale problems.
	The left figure of Fig.~\ref{time_vs_10} shows the CPU time of Treecode 1 as a function of $N$, in comparison with the direct summation method, which employs a 6th-degree Gaussian numerical quadrature. Due to the high computational complexity of the direct summation method, we computed results only for the first two data points, with subsequent values estimated. From Fig.~\ref{time_vs_10}, it is clear that as $N$ increases, the CPU time for the direct summation method grows rapidly, exhibiting the expected complexity $O(N^2)$. In contrast, the CPU time for the $p$-adaptive treecode grows much more slowly, with a rate slightly faster than $O(N)$. This behavior is consistent with the theoretical complexity $O(N\log N)$ of the treecode algorithm, demonstrating a substantial performance advantage over the direct summation method, particularly for large-scale problems.

	From the results in Fig.~\ref{fig:err_convergence}, it is evident that for element counts ranging from $10^3$ to $10^6$, the uniform expansion method with $p=10$ produces a similar error to the $p$-adaptive algorithm. Therefore, we aims to compare the efficiency of the $p$-adaptive treecode and the uniform-order treecode under the condition of achieving the same accuracy. This comparison highlights the computational savings of the $p$-adaptive approach, particularly in scenarios where achieving higher accuracy is crucial. The right figure of Fig.~\ref{time_vs_10} presents a comparison of the CPU time for the treecode algorithm with uniform and adaptive orders for different numbers of elements $N$. When the number of elements is small $(N<10^4$), the computational efficiency of both methods is comparable, with relatively low computation times, making them suitable for small-scale problems. However, as the number of elements increases, particularly around $N\approx10^6$, the computational cost of the uniform expansion method grows significantly more than that of the $p$-adaptive case. Consequently, for large-scale problems $(N>10^6)$, the proposed treecode algorithm offers a substantial advantage in computational efficiency over the uniform expansion method.
	
	\begin{figure}[!ht]
		\centering
		\includegraphics[width=0.45\textwidth]{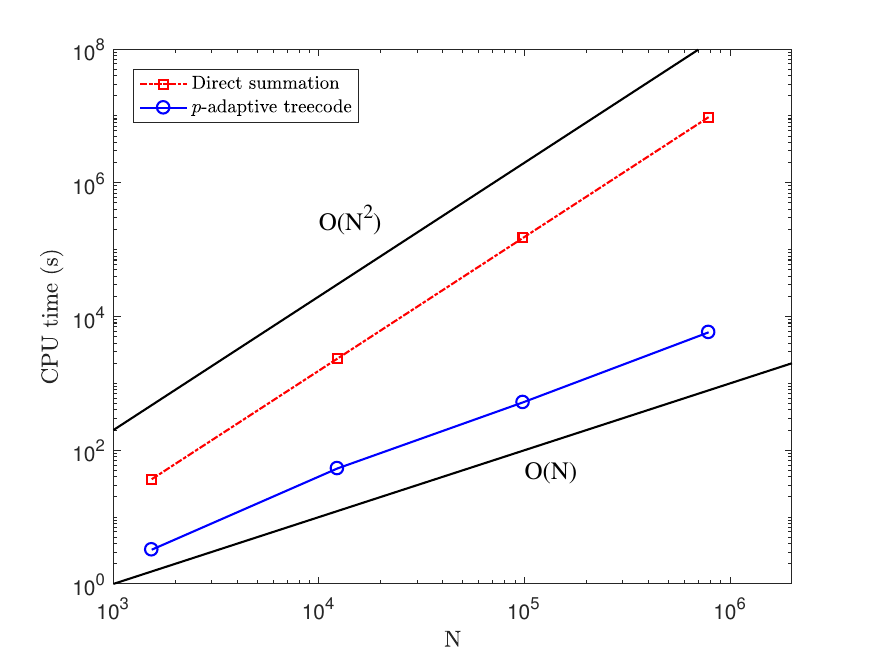}
		\includegraphics[width=0.45\textwidth]{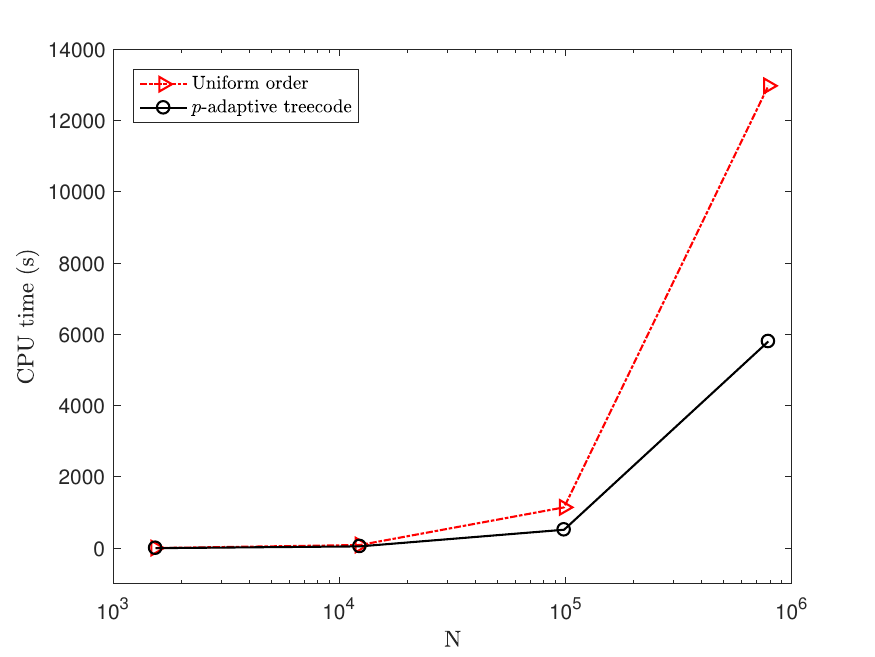}
		\caption{Left: CPU time performance of Treecode 1 with $P_{max} = 10$; right: comparison of CPU time between the Treecode 1 and treecode with uniform expansion order $p=10$ for varying $N$.}
		\label{time_vs_10}
	\end{figure}

	\subsection{Effect of parameters in $p$-adaptive treecode algorithm}
	The following section will examine the impact of parameters in the $p$-adaptive treecode algorithm. To prevent the influence of domain truncation and mesh decomposition, the evaluation of errors will be based on the $L^2$ error between the proposed algorithm Treecode 1 and the direct summation method, which defined by
	\begin{equation}
		E_2 = \|u_{tc}-u_{ds}\|_{L^2}=\left(\int_\Omega \left| u_{tc}(\mathbf{x})-u_{ds}(\mathbf{x})\right|^2\dif \mathbf{x}\right)^{1/2},
	\end{equation}
	where $u_{tc}$ represents the numerical results computed by Treecode 1 algorithm, and $u_{ds}$ is the result computed by $6$th-order Gaussian type quadrature. 
	
	First, we examine the impact of the accuracy parameter $\epsilon$ and the maximum expansion order $P_{max}$ on the $L^2$ error of the $p$-adaptive treecode algorithm. Furthermore, we compare the error convergence behavior of the $p$-adaptive treecode with that of the uniform expansion method as the expansion order increases, providing insights into the relationship between expansion order and accuracy.
	The left figure of Fig.~\ref{fig:t_err} shows the variation in the $L^2$ error of the uniform expansion method as the order $p$ increases, as well as the variation in the $L^2$ error of the Treecode 1 algorithm as the maximum expansion order increases for different $\epsilon$. As $p$ increases, the error $E_2$ of the treecode with uniform expansion decreases consistently, reaching approximately $10^{-11}$ at $p=50$, indicating significant convergence. When different error tolerances $\epsilon$ are applied with $P_{max} = 50$, the error remains stable within the corresponding tolerance range. This demonstrates that the error tolerance $\epsilon$ plays a dominant role in determining the final error when $P_{max}$ is large enough, effectively setting a lower bound for the $L^2$ error. The effect of $P_{max}$ on the error will be explored in the following experiments.
	
	Furthermore, we aim to validate the significant performance advantage of the $p$-adaptive treecode over the uniform expansion method under the condition of achieving the same accuracy. The right figure of Fig.~\ref{fig:t_err} compares the CPU time for the treecode with $p$-adaptive and uniform expansion methods as a function of the error. The error in Treecode 1 is controlled using different $\epsilon$ values, with the maximum expansion order set to $P_{max}=50$. As the error decreases, the CPU time for the uniform expansion method increases sharply, reaching approximately $160$ seconds for an error of $10^{-10}$. In contrast, the $p$-adaptive expansion case shows a much slower increase in CPU time, remaining around $30$ seconds even for the smallest error. This indicates that the $p$-adaptive treecode is significantly more efficient, particularly for high precision. These results demonstrate that while both methods achieve similar errors, the $p$-adaptive treecode provides a substantial performance advantage in terms of computational cost, especially when high accuracy is required.
	
	\begin{figure}[!ht]
		\begin{center}
			\includegraphics[width=0.45\textwidth]{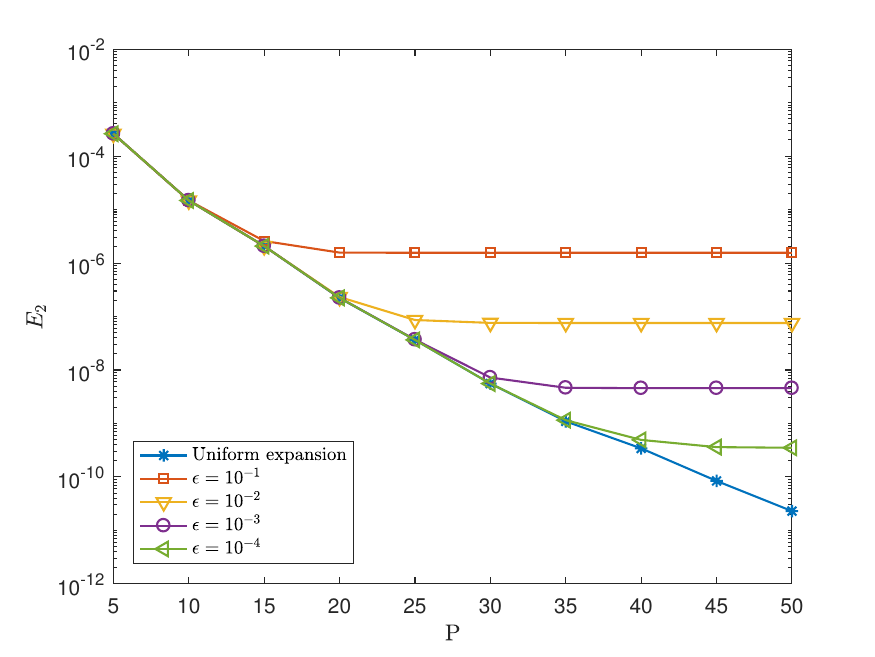}
			\includegraphics[width=0.45\textwidth]{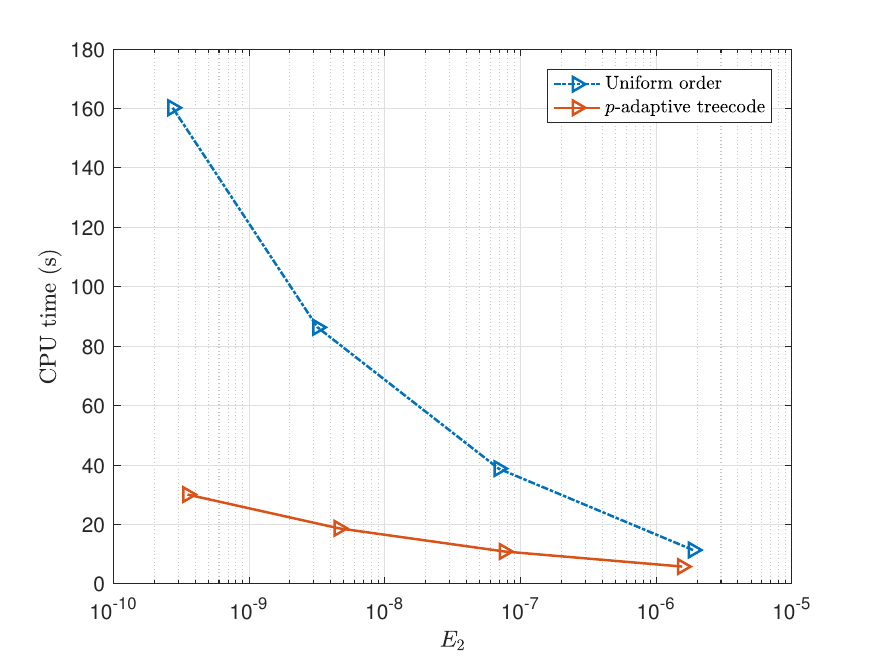}
		\end{center}	
		\caption{Left: the variation in the $L^2$ error of the uniform expansion method with increasing order $p$, as well as the variation in the $L^2$ error of the Treecode 1 algorithm at increasing maximum expansion order for different $\epsilon$; right: efficiency comparison of $p$-adaptive treecode and the uniform order case of various $L^2$ error in a cube with $N = 1536$, $P_{max} = 50$.}
		\label{fig:t_err}
	\end{figure}

	There is something interesting when we examine the impact of varying the maximum expansion order $P_{max}$ in relation to $\epsilon$ on the error for a denser mesh with $N=12,288$. Fig.~\ref{fig: treeocode1_err} shows the $L^2$ error of Treecode 1 as a function of the accuracy parameter $\epsilon$ for different maximum expansion orders $P_{max}$. As the value of $\epsilon$ decreases, the error magnitude in Fig.~\ref{fig: treeocode1_err} also decreases. However, the maximum expansion order limits the influence of $\epsilon$. This is because higher accuracy demands require higher-order expansions. Consequently, when the maximum expansion order cannot meet the accuracy requirements, $\epsilon$ fails to control the error. Furthermore, it is observed that the error decreases consistently with the reduction of $\epsilon$ when $P_{max}=100$. Thus, a maximum expansion order of $P_{max}=100$ is necessary to achieve high accuracy in this case.

	However, the proposed algorithm pre-stores the multipole expansion coefficients of all elements up to the maximum order to enhance computational efficiency. If the maximum order of $100$ is selected, the storage requirements become significant for large-scale problems. Moreover, the overhead of approximate computation using an excessively high expansion order can exceed that of the direct summation method. Therefore, the next stage of the process involves utilizing the enhanced Treecode 2 algorithm to optimize both error performance and computational efficiency.
	
	\begin{figure}[!ht]
		\centering
		\includegraphics[width=0.6\textwidth]{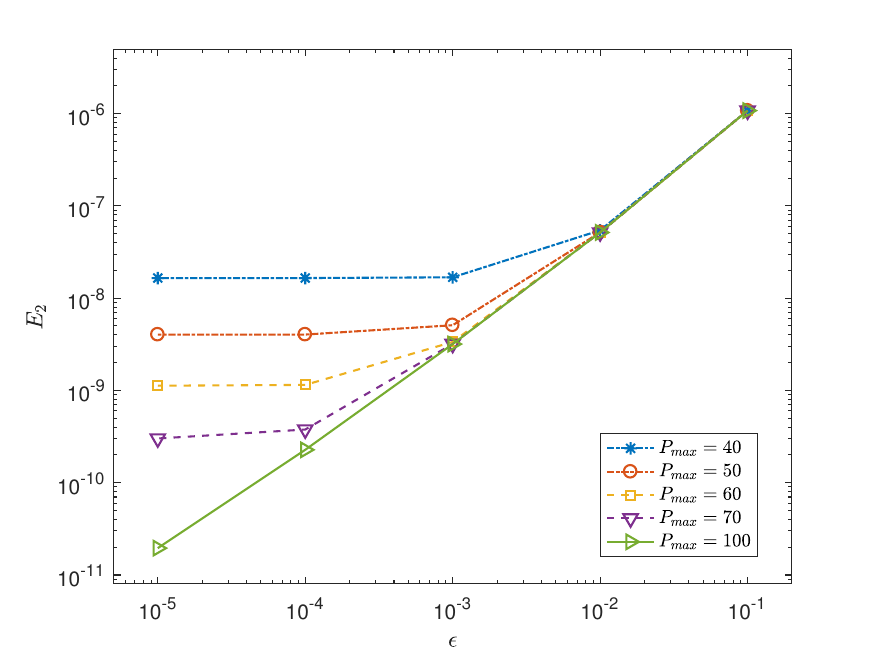}
		\caption{$L^{2}$ error of Treecode 1 with different accuracy parameter $\epsilon $ for different maximum orders $P_{max}$ in the mesh with $N=12288$.}
		\label{fig: treeocode1_err}
	\end{figure}
	
	\subsection{Treecode 2: further acceleration using direct summation}

	In this section, we evaluate the performance of the Treecode 2 algorithm in terms of accuracy and computational cost. In Treecode 2, the computational efficiency of interactions between a single target element and a non-neighbor element was tested using expansions with various orders and direct summation method. This is done by running the standard procedure, which yields a critical expansion order of $25$. It should be noted that this order may vary slightly depending on the experimental setting. Fig.~\ref{fig:Treecode_time_1vs2} illustrates the error and CPU time performance under varying $\epsilon$ values. The left figure shows that the error consistently decreases as $\epsilon$ decreases. The right figure shows that the CPU time increases with decreasing error, but its growth is less significant than in the Treecode 1 algorithm. These results demonstrate that the combination of $p$-adaptive expansion and direct summation enhances the accuracy of treecode algorithm and significantly improves its efficiency.
	
	\begin{figure}[!ht]
		\centering
		\includegraphics[width=0.45\textwidth]{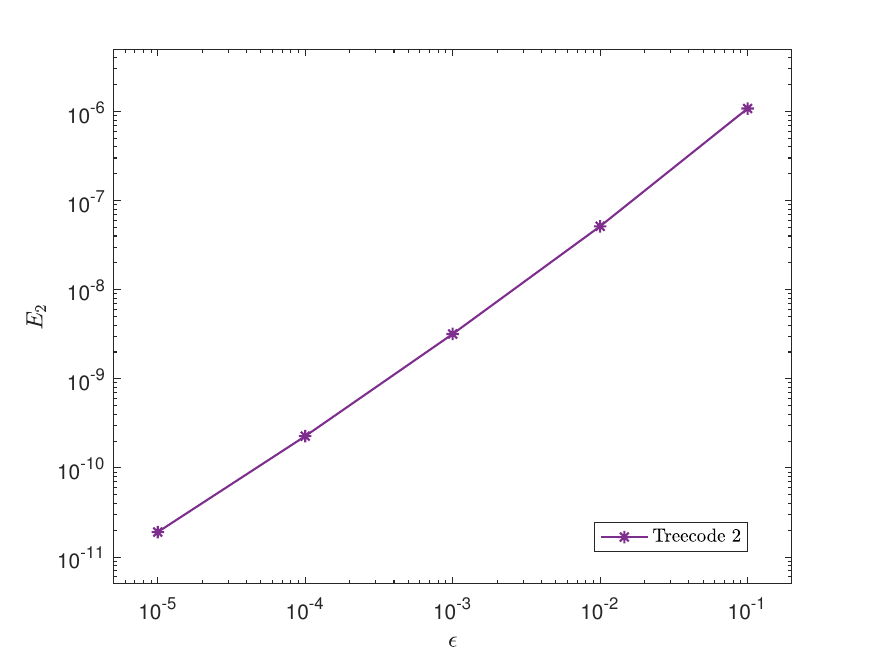} 
		\includegraphics[width=0.45\textwidth]{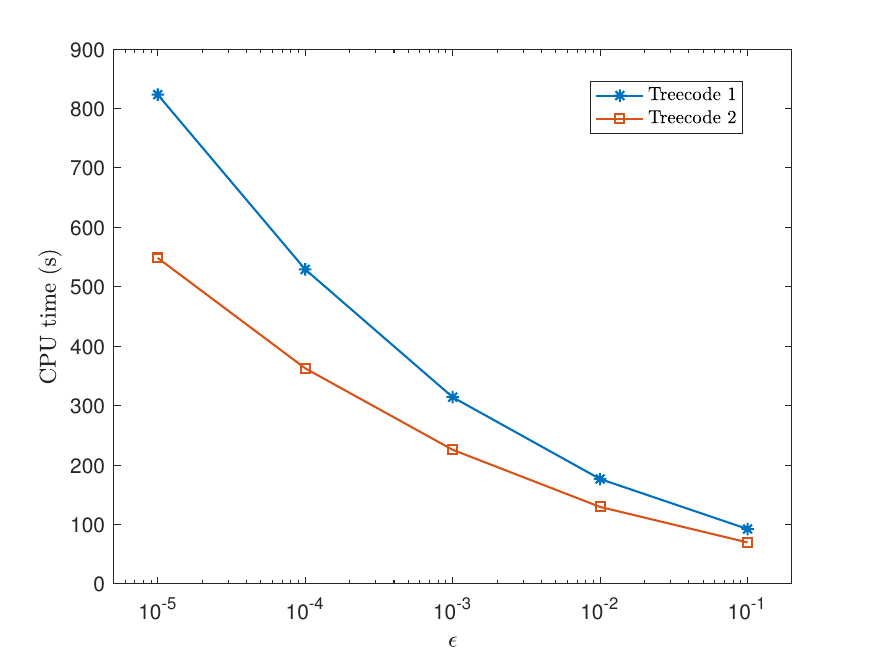}
		\caption{Left: The $L^2$ error of Treecode 2 algorithm varies with parameter $\epsilon$ in the mesh with $N=12288$; right: the CPU time of Treecode 1 and Treecode 2 algorithms varies with accuracy parameter $\epsilon$ in the mesh with $N=12288$.}
		\label{fig:Treecode_time_1vs2}
	\end{figure}
	
	\section{Conclusion}
	\label{section 5}
	
	In this paper, we propose a $p$-adaptive treecode algorithm designed to enhance both computational efficiency and accuracy in the fast evaluation of the fundamental solution to the Poisson equation. Through a systematic error analysis of the treecode algorithm, we develop a strategy for the non-uniform distribution of the multipole expansion order to meet a given error tolerance, thereby improving our understanding of error behavior in the treecode algorithm. Compared to the uniform-order treecode that use uniform expansion orders, our $p$-adaptive treecode significantly enhances efficiency while serving as a robust Poisson solver for applications requiring both high accuracy and computational efficiency. Additionally, our algorithm benefits from an HGT-based implementation and employs a tetrahedral mesh, streamlining the tree construction and domain decomposition processes without imposing restrictions on the shape of the computational region. The efficiency and accuracy of the $p$-adaptive treecode algorithm are validated through various numerical experiments, which consistently demonstrate the desired performance, including a dramatic reduction in computational complexity compared to the uniform expansion method. This makes our algorithm highly competitive for bottleneck problems, such as the demagnetizing field calculation in computational micromagnetics.
	
	A promising direction for future work is to incorporate adaptive mesh refinement, which would dynamically adjust mesh resolution based on local error estimation. This would further improve the algorithm’s efficiency and accuracy, particularly in complex domains, making the proposed treecode more versatile for a broader range of problems.
	
	\section*{Acknowledgement}
	The authors would like to thank Prof. G. Hu from University of Macau for the valuable discussion. The authors would like to thank the support from the Science and Technology Development Fund, Macau SAR (Grant No. 0031/2022/A1).
	\bibliographystyle{plain}
	\bibliography{AdapTreecode}
\end{document}